\DeclareMathOperator\GL{GL}%
\DeclareMathOperator\diag{diag}%
\newcommand{\lire}{\li\,\cdot\,,\cdot\,\re}
\newcommand{\li}{\langle}
\newcommand{\re}{\rangle}
\newcommand{\qu}{^{(2)}}
\newcommand{\cA}{{\mathcal A}} 
\newcommand{\cS}{{\mathcal S}}
\newcommand{\cR}{{\mathcal R}}
\newcommand{\bH}{{\mathbb H}}
\newcommand{\bP}{{\mathbb P}}
\newcommand{\LK}{_{(L)}} 
\newcommand{\LFV}{L\otimes_{F} V} 
\newcommand{\VL}{V_{(L)}} 
\newcommand{\LFL}{L\otimes_{F} L} 
\newcommand{\LL}{L_{(L)}} 
\newcommand{\ot}{\otimes} 
\newcommand{\bPLF}{\bP(L_{F})} 
\newcommand{\bPLL}{\bP(L_{(L)})} 
\newcommand{\x}{\times} 
\renewcommand{\phi}{\varphi}
\newtheorem{thm}{Theorem}[section]
\newtheorem{prop}[thm]{Proposition}
\newtheorem{lem}[thm]{Lemma}
{\theoremstyle{definition}
\newtheorem{defi}[thm]{Definition}
\newtheorem{rem}[thm]{Remark}
\newtheorem*{ass}{Global assumption}
}
\begin{document}

\title{A note on Clifford parallelisms in characteristic two}

\author{Hans Havlicek}
\date{}

\maketitle

\begin{abstract}
It is well known that a purely inseparable field extension $L/F$ with some
extra property and degree $[L:F]=4$ determines a Clifford parallelism on the
set of lines of the three-dimensional projective space over $F$. By extending
the ground field of this space from $F$ to $L$, we establish the following
geometric description of such a parallelism in terms of a distinguished
`absolute pencil of lines' of the extended space: Two lines are Clifford
parallel if, and only if, there exists a line of the absolute pencil that meets
both of them.
\par~\par\noindent
\textbf{Mathematics Subject Classification (2010):} 51A15, 51A40, 51J10, 51J15.\\
\textbf{Key words:} Clifford parallelism; regular spread; indicator set;
absolute pencil; purely inseparable field extension.
\end{abstract}

\section{Introduction}

A detailed survey of various old results about Clifford parallel lines in the
three-dimensional elliptic space (over the real numbers) can be found in the
recent article \cite{bett+r-12a}. One such result is a description of Clifford
parallel lines in terms of the complexified elliptic space, and it may be
summarised as follows: The elliptic metric yields a hyperbolic quadric of the
complex projective space; it is known as the `absolute quadric'. Two lines of
the elliptic space are Clifford parallel if, and only if, there exists a line
of the absolute quadric that meets both of them (after complexification). Since
there are two reguli on the absolute quadric, one actually gets two
parallelisms. It is conventional to label them as the `left' and `right'
Clifford parallelism of the elliptic space. An alternative approach uses the
skew field $\bH$ of real quaternions as underlying vector space of the elliptic
space. The norm function of $\bH$ is a quadratic form, and it yields the
elliptic metric. The left and right Clifford parallelism arise from the left
and right multiplication in $\bH$, respectively; see \cite[p.~8]{blasch-60a}.

Any quaternion skew field $L$ with arbitrary characteristic and centre $F$,
say, can be used (as in the classical case) to define a left and a right
Clifford parallelism in the three-dimensional projective space on the
$F$-vector space $L$. This finding from \cite{karz+k+s-73a} was the starting
point for the research in \cite{blunck+p+p-07a} and \cite{blunck+p+p-10a},
where the following was established: If the ground field $F$ is extended in an
appropriate way, then the description from above of Clifford parallel lines in
terms of the two reguli on a hyperbolic quadric basically remains valid.
However, the details are much more involved for an arbitrary quaternion skew
field $L$ than for the real quaternions $\bH$.
\par
According to \cite{karz+k+s-73a}, there is one more kind of Clifford
parallelism in three-dimensional projective spaces over certain fields $F$ of
characteristic two. The algebraic definition of such a parallelism is similar
to what we had in the preceding paragraphs, but now one has to use a field
extension $L/F$ with degree $[L:F]=4$ and such that $a^2\in F$ for all $a\in
L$. So $L/F$ is purely inseparable. Due to the commutativity of $L$, left
parallel lines now are the same as right parallel lines.

There arises the question if also in this remaining case there is a geometric
description of Clifford parallel lines similar to the one from
\cite{blunck+p+p-07a} and \cite{blunck+p+p-10a}. We shall have to take several
steps before we can provide an affirmative answer. First, we extend the ground
field of the projective space (with underlying vector space $L$) from $F$ to
$L$. Next, we find in this extended space a distinguished plane $\Pi$ which
will be called the \emph{absolute plane}. Its points are determined by the
singular vectors of a quasilinear quadratic form. So the points of the absolute
plane correspond, loosely speaking, to the points of the hyperbolic quadric
from above. However, we must not use all lines of this plane to accomplish our
task, but only those passing through a particular \emph{absolute point} $\cA$
of $\Pi$. This gives a single \emph{absolute pencil of lines} which now takes
the part of the two reguli on a hyperbolic quadric. Finally, our main result is
Theorem~\ref{thm:clifford}: Two lines of the three-dimensional projective space
over $F$ are Clifford parallel if, and only if, there exists a line of the
absolute pencil that meets both of them (after extension of the ground field).
\par
For more information about parallelisms in general, we refer to
\cite{bett+r-12a}, \cite{john-03a}, the book \cite{john-10a}, and the
references therein. At one point we shall come across the double space axiom,
which is part of the well established axiomatic description of Clifford
parallelisms; see, among others, \cite{herz-80a}, the survey in
\cite{karz+k-88a}, and \cite{paso-10a}. Notions from geometry and algebra that
are used without further reference can be found, for example, in
\cite{bour-89a}, \cite{brau-76-12}, \cite{dieu-71a}, \cite{hirsch-85},
\cite{hirsch-98}, \cite{lang-93a}, and \cite{segr-61a}.

\section{Algebraic preliminaries}\label{se:prelim}

Let $V$ be a vector space over a field $F$. We shall also write $V_F$ instead
of $V$ in order to clarify the ground field. If $F$ is a subfield of a field
$L$ then $V$ can be extended to a vector space over $L$ by the following well
known construction \cite[p.~277]{bour-89a}: The tensor product $(\LFV)_F$ can
be made into a vector space over $L$ by letting
\begin{equation}\label{eq:skp}
    a\sum_s a_s\ot v_s :=\sum_s(aa_s)\ot v_s \mbox{~~for all~~} a,a_s\in L,\; v_s\in V .
\end{equation}
We use the shorthand $\VL:=(\LFV)_L$ for this $L$-vector space. The canonical
embedding of $V_F$ in $\VL$ is given by $v\mapsto 1\ot v$. Suppose now that $V$
is also an associative $F$-algebra with unit $e$. A multiplication in $\LFV$
can be defined by the formula
\begin{equation}\label{eq:mult}
    \Big(\sum_s a_s\ot v_s \Big) \cdot \Big(\sum_t a_t'\ot v_t' \Big):=
    \sum_{s,t} (a_s a_t')\ot (v_s v_t')  \mbox{~~for all~~}a_s,a_t'\in L,\;v_s,v_t'\in V .
\end{equation}
In this way $\VL$ turns into an associative algebra over $L$ with unit $1\ot
e$, and the embedding from above is a monomorphism
\cite[pp.~433--434]{bour-89a}.

\par

\begin{ass}
From now on let $F$ be a field of characteristic $2$ and let $L$ be an
extension field of $F$ with degree $[L:F]=4$ and such that $a^2\in F$ for all
$a\in L$.
\end{ass}
The field extension $L/F$ is purely inseparable. Clearly, each $a \in L$ is a
zero of the quadratic polynomial $X^2+a^2\in F[X]$, whence $L$ is a
\emph{quadratic} or, in a different terminology, a \emph{kinematic} algebra
over $F$ \cite[p.~423]{karz-73a}. The quadratic form
\begin{equation}\label{eq:quadrat}
    (\,\cdot\,)^2:L_F\to F : y\mapsto y^2
\end{equation}
has no singular vectors. It is the \emph{norm form} of the algebra $L_F$.
Following \cite[p.~150]{arf-41a}, \eqref{eq:quadrat} is a \emph{quasilinear}
quadratic form. This means that \eqref{eq:quadrat} is a semilinear mapping of
the vector space $L_F$ in the vector space $F$ over its subfield $F\qu$ formed
by all squares, with $F\to F\qu : f\mapsto f^2$ as accompanying field
isomorphism; see also \cite[p.~33]{dieu-71a}.
\par
We now specialise the algebra $V_F$ from above to be $L_F$ and obtain the
four-dimensional commutative and associative $L$-algebra $(\LFL)_L=\LL$ with
unit $1\ot 1$. According to \eqref{eq:skp}, scalars from $L$ act on the
\emph{first} factors of pure tensors. So, the scalar multiples of $1\ot 1$
comprise the one-dimensional subspace $L(1\ot 1)=\{x\ot 1\mid x\in L\}$ of
$\LL$. This subspace is a first isomorphic copy of the field $L$ within the
algebra $\LL$. A second copy is given by the subset $\{1\ot y\mid y\in L\}$.
Here the elements of $L$ appear in their role as vectors of $L_F$. None of
these isomorphic copies of $L$ will be identified with $L$.
\par
The multiplication in the field $L$ is an $L$-bilinear mapping $L\times L\to
L:(x,y)\mapsto x y$. Clearly, this mapping is also $F$-bilinear. By the
universal property of the tensor product (for vector spaces over $F$) there is
a unique $F$-linear mapping
\begin{equation}\label{eq:pi}
    \pi : \LFL \to L  \mbox{~~such that~~}  (x\ot y)^\pi=x y \mbox{~~for all~~}x,y\in L .
\end{equation}
Using \eqref{eq:skp} and \eqref{eq:mult}, a straightforward calculation shows
that $\pi: \LL\to L_L$ actually is a surjective homomorphism of unital
$L$-algebras. Since $L$ is a field, this implies already that
\begin{equation}\label{eq:Pi}
    \Pi:=\ker\pi
\end{equation}
is a three-dimensional maximal ideal of the four-dimensional algebra $\LL$, but
we easily can say more.

\begin{lem}\label{lem:local}
The L-algebra $\LL$ is local and quadratic. The ideal of non-invertible
elements of $\LL$ is the kernel $\Pi$ of the homomorphism $\pi$ from
\eqref{eq:pi}.
\end{lem}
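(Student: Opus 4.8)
The plan is to derive all three assertions from a single identity for squares in $\LL$. I would first prove that
\[
    z^2 = (z^\pi)^2\,(1\ot 1)\qquad\text{for all } z\in\LL ,
\]
where $z^\pi\in L$ denotes the image of $z$ under the homomorphism $\pi$ from \eqref{eq:pi}. To establish this, write $z=\sum_s a_s\ot y_s$ with $a_s,y_s\in L$ and expand $z^2$ using the multiplication rule \eqref{eq:mult}. Since $\LL$ is commutative and $\Char F=2$, the off-diagonal products $a_sa_t\ot y_sy_t$ with $s\neq t$ cancel in pairs, leaving $z^2=\sum_s a_s^2\ot y_s^2$. Here the Global assumption intervenes: each $y_s^2$ lies in $F$ and may therefore be moved across the tensor sign, so that $a_s^2\ot y_s^2=a_s^2y_s^2\,(1\ot 1)$. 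Summing, and invoking $\Char F=2$ once more to recognise $\sum_s a_s^2y_s^2=\big(\sum_s a_sy_s\big)^2=(z^\pi)^2$, yields the claimed formula.

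Granting the identity, the three claims follow quickly. First, each $z$ annihilates the monic polynomial $X^2+(z^\pi)^2\in L[X]$, so $\LL$ is a quadratic $L$-algebra. Secondly, if $z^\pi\neq 0$, then $z^\pi$ is a nonzero element of the field $L$, and the identity gives $z\cdot\big((z^\pi)^{-2}z\big)=(z^\pi)^{-2}z^2=1\ot 1$, so $z$ is invertible. Thirdly, if $z^\pi=0$, that is $z\in\Pi=\ker\pi$, then $z^2=0$, whence $z$ is nilpotent and therefore not invertible. Consequently the non-invertible elements of $\LL$ are exactly the elements of $\Pi$; being the kernel of the homomorphism $\pi$, this set is an ideal, and a commutative ring whose non-units form an ideal is local. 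This proves all assertions, with $\Pi$ as the maximal ideal.

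The only step that is not purely formal is the square identity. Equivalently, writing $z=(z^\pi)(1\ot 1)+w$ with $w\in\Pi$, one sees from $z^2=(z^\pi)^2(1\ot 1)+w^2$ that the whole matter reduces to the vanishing $w^2=0$ for every $w\in\Pi$. I expect this to be the main point, and it is precisely here that both standing hypotheses are indispensable: $\Char F=2$ eliminates the cross terms (together with the factor $2$ multiplying them), while the property $a^2\in F$ for all $a\in L$ is what collapses each diagonal term $a_s^2\ot y_s^2$ onto the line $L(1\ot 1)$. Without either assumption the argument breaks down.
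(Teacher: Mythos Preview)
Your argument is correct and follows the paper's proof essentially line by line: the same square identity $z^2=(z^\pi)^2(1\ot 1)$ is established via the Frobenius-type expansion in characteristic two together with $y_s^2\in F$, and the consequences (quadratic, inverse formula $(z^\pi)^{-2}z$, nilpotence on $\Pi$, locality) are drawn in the same way. The only minor addition in the paper is the observation that the minimal polynomial actually lies in $F[X]\subset L[X]$ since $(z^\pi)^2\in F$, but this does not affect the claims being proved.
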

\begin{proof}
Any $g\in\LL$ can be written as $g=\sum_s a_s\ot b_s$ with $a_s,b_s\in L$. We
read off from
\begin{equation*}
    g^2 = \Big(\sum_s a_s\ot b_s\Big)^2 =
    \sum_s a_s^2\ot b_s^2 =
    \Big(\sum_s a_s^2 b_s^2\Big) (1\ot 1)
    = \big(g^\pi\big)^2  (1\ot 1)
\end{equation*}
that $g$ is a zero of the polynomial $X^2+\big(g^\pi\big){}^2\in F[X]\subset
L[X]$, whence $\LL$ is quadratic. For $g\notin\Pi$ we obtain
$g^{-1}=(g^\pi)^{-2}g$, whereas any $g\in\Pi$ clearly has no multiplicative
inverse due to $g^2=0$. Thus the ideal $\Pi$ comprises precisely the
non-invertible elements of $\LL$. So, by definition, $\LL$ is a local algebra,
and $\Pi$ has the required property.
\end{proof}

The canonically defined $L$-linear form $\pi$ maps $1\ot y\mapsto y$ for all
$y\in L$. Due to $F\varsubsetneqq L$, the form $\pi$ does not arise as an
extension of an $F$-linear form on $L_F$. However, the \emph{square of $\pi$},
i.e., the \emph{norm form}
\begin{equation}\label{eq:norm}
   \LL \to L : z\mapsto (z^\pi)^2
\end{equation}
is a quasilinear quadratic form extending the quasilinear quadratic form
\eqref{eq:quadrat} from $L_F$ to $\LL$. Indeed, the norm of $1\ot y$ equals
$y^2$ for all $y\in L$. The non-zero vectors of $\Pi$ are precisely the
singular vectors of the norm form \eqref{eq:norm}.

\par
Let us return to multiplication. Any $b\in L$ determines the mapping
\begin{equation}\label{eq:mu_b}
    \mu_b: L\to L : x\mapsto x b ,
\end{equation}
i.e., the multiplication of elements of $L$ by the fixed element $b$. Any such
$\mu_b$ clearly is an $L$-linear mapping $L_L\to L_L$, but below we shall only
make use of its $F$-linearity. Likewise, for any $h\in\LL$ there is an
$L$-linear mapping $\mu_h: \LL\to\LL : z\mapsto zh$. The canonical extension of
$\mu_b:L_F\to L_F$ from $L_F$ to $\LL$ is the \emph{Kronecker product} (or:
\emph{tensor product} \cite[p.~245]{bour-89a}) $\mu_1\ot \mu_b$ which acts on
pure tensors by sending
\begin{equation}\label{eq:multkronecker}
    x\ot y \mapsto   (x1)\ot (yb) = (x\ot y)(1\ot b) .
\end{equation}
So $\mu_1\ot \mu_b=\mu_{1\ot b}$. Note that, contrary to what we had in
\eqref{eq:skp}, the element $b\in L$ acts on the \emph{second} factors of pure
tensors in \eqref{eq:multkronecker}.

\par
At times it will be convenient to use coordinates (which are written as rows).
To this end we first choose $i,j\in L$ such that $1,i,j$ are linearly
independent over $F$. Then
\begin{equation}\label{eq:basisL}
    (1,i,j,k) \mbox{~~with~~} k:=ij
\end{equation}
is a basis of $L_F$ and
\begin{equation}\label{eq:basisLL}
    (1\ot 1, 1\ot i,1\ot j, 1\ot k)
\end{equation}
is a basis of $\LL$. These bases allow us to replace $L_F$ and $\LL$ with $F^4$
and $L^4$, respectively. For example, the coordinate representation of the
homomorphism $\pi$ from \eqref{eq:pi} is the mapping
\begin{equation*}
    L^4 \to L : (z_0,z_1,z_2,z_3)\mapsto z_0+  i z_1+ j z_2+ k z_3,
\end{equation*}
whence the quadratic norm form \eqref{eq:norm} has the representation
\begin{equation*}
    L^4 \to L : (z_0,z_1,z_2,z_3)\mapsto z_0^2+  i^2 z_1^2+ j^2 z_2^2+ k^2 z_3^2 .
\end{equation*}
If we restrict the domain of the last mapping to $F^4$ and replace its codomain
by $F$ then the description of the quadratic norm form \eqref{eq:quadrat} in
terms of coordinates is obtained. When working in $\LL$ it will often be more
appropriate to change from the basis \eqref{eq:basisLL} to another basis of
$\LL$, namely
\begin{equation}\label{eq:idealbasis}
\begin{aligned}
     (1\ot 1,p,q,r) \mbox{~~with~~}&p:=1\ot i + i\ot 1,\\
    &q:=1\ot j+j\ot 1,\\
    &r:=pq=   1\ot k + i\ot j+ j\ot i + k\ot 1.
\end{aligned}
\end{equation}
For example, if $g\in \LL$ has coordinates $(g_0,g_1,g_2,g_3)\in L^4$ with
respect to the basis \eqref{eq:idealbasis} then $g^\pi=g_0$ and $g_0^2$ is the
norm of $g$.
\par
While the elements $p,q,r$ depend on the choice of $i,j$ in the basis
\eqref{eq:basisL}, the span of $r$ has a basis-free meaning:

\begin{lem}\label{lem:annihil}
Let $\cA$ be the annihilator in $\LL$ of the maximal ideal $\Pi$. Upon choosing
an arbitrary basis $(1,i,j,k)$ of $L_F$ as in \eqref{eq:basisL} and by changing
to the associated basis \eqref{eq:idealbasis} of $\LL$, there holds
\begin{equation}\label{eq:A}
    \cA =  L r = L(1\ot k + i\ot j+ j\ot i + k\ot 1).
\end{equation}
\end{lem}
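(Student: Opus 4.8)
The plan is to work entirely in the basis \eqref{eq:idealbasis} and to reduce the defining condition of the annihilator to a short linear system. First I would record that, since a vector $g\in\LL$ with coordinates $(g_0,g_1,g_2,g_3)$ relative to \eqref{eq:idealbasis} satisfies $g^\pi=g_0$, the maximal ideal $\Pi=\ker\pi$ coincides with the $L$-span of $p,q,r$. Consequently a vector $h\in\LL$ annihilates all of $\Pi$ precisely when it annihilates each of the three generators $p,q,r$; this reduction rests only on the $L$-bilinearity of the multiplication in $\LL$.

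The key computational step is to set up the multiplication table of $p,q,r$. Here the two hypotheses of the global assumption enter: because $\LL$ is commutative of characteristic two, the mixed terms in a square drop out, and because $i^2,j^2\in F$ one gets $p^2=(1\ot i)^2+(i\ot 1)^2=i^2(1\ot1)+i^2(1\ot1)=0$ and, in the same way, $q^2=0$. Together with $pq=qp=r$ (the definition of $r$), this yields $pr=p^2q=0$, $qr=pq^2=0$ and $r^2=p^2q^2=0$. Thus every product of two of the elements $p,q,r$ vanishes except $pq=qp=r$.

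With this table in hand the remainder is a direct calculation. Writing $h=h_0(1\ot1)+h_1p+h_2q+h_3r$ and expanding, I would obtain
\begin{equation*}
    hp=h_0 p+h_2 r,\qquad hq=h_0 q+h_1 r,\qquad hr=h_0 r .
\end{equation*}
Since $(1\ot1,p,q,r)$ is an $L$-basis of $\LL$, the simultaneous vanishing of these three products forces $h_0=h_1=h_2=0$, while $h_3$ remains arbitrary. Hence $\cA=Lr$, which is the assertion \eqref{eq:A}.

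I expect no serious obstacle; the only point demanding care is the verification of the multiplication table, where one must use both characteristic two (to kill the cross terms) and the property $a^2\in F$ (to collapse the surviving pure tensors back into multiples of $1\ot1$). A secondary point worth stating explicitly is the basis-freeness claimed in the lemma: although $p,q,r$ depend on the choice of $i,j$, the line $Lr$ is characterised intrinsically as the annihilator of $\Pi$, so it does not in fact depend on that choice.
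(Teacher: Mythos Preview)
Your argument is correct and follows essentially the same approach as the paper: both establish the multiplication table $p^2=q^2=0$, $pq=r$, $pr=qr=r^2=0$ and then use it to pin down the annihilator. The only cosmetic difference is that the paper handles the inclusion $\cA\subset Lr$ by computing the annihilators of $p$ and of $q$ separately (via a dimension count) and intersecting them, whereas you expand $hp,hq,hr$ in coordinates and read off the linear conditions directly; the underlying computation is the same.
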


\begin{proof}
We recall from the proof of Lemma~\ref{lem:local} that $z^2=0$ for all
$z\in\Pi$. Thus the elements from \eqref{eq:idealbasis} satisfy $p r = p^2 q =
0$, $q r=q^2 p=0$, and $r^2=0$. From $\Pi=Lp\oplus Lq\oplus Lr$, all elements
of $\Pi$ are annihilated by $r$, and so $L r\subset\cA$. The annihilator of $p$
clearly is a subspace of $\Pi$ containing $p$ and $r$. Due to $pq=r\neq 0$ and
$\dim\Pi=3$, the two-dimensional subspace $L p\oplus L r$ is the annihilator of
$p$. Likewise the annihilator of $q$ equals $Lq\oplus L r$, whence $\cA\subset
(L p\oplus Lr)\cap (Lq\oplus Lr)= L r$, as required.
\end{proof}
Since $\cA$ is an ideal of the commutative ring $\LL$, it may also be written
as the principal ideal $\LL r$, which is generated by the element $r$. Another
description of this ideal is $\cA=\Pi\cdot \Pi=\{zw\mid z,w\in\Pi\}$. We noted
already subsequent to \eqref{eq:norm} that $\Pi$ is the set of vectors of $\LL$
with norm zero, so that $\cA$ is also related to the norm form of $\LL$.

\section{The absolute pencil}

We shall view $L_F$ as the underlying vector space of a projective space $\bPLF
\cong\bP_3(F)$. We adopt the usual geometric terms: Points, lines and planes
are the subspaces of $L_F$ with dimension one, two, and three, respectively.
Incidence is symmetrised inclusion. Likewise, $\LL=(\LFL)_L$ gives rise to a
projective space $\bPLL\cong\bP_3(L)$. The canonical embedding of $\bPLF$ in
$\bPLL$ is given by $Fx\mapsto L(1\ot x)$. Those points of $\bPLL$ that are
images under this embedding are called \emph{$F$-rational}. A subspace of $\LL$
is called \emph{$F$-rational} if it is spanned by its $F$-rational points. If
$T$ is a subspace of $L_F$ then its extension $T\LK$ is an $F$-rational
subspace of the same dimension, and all $F$-rational subspaces of $\LL$ arise
in this way. The projective space $\bPLL$ has two distinguished subspaces that
stem from the algebra $\LL$:
\begin{defi}\label{def:absolute}
We call the ideal $\cA$ from \eqref{eq:A} the \emph{absolute point} and the
ideal $\Pi$ from \eqref{eq:Pi} the \emph{absolute plane} of the projective
space $\bPLL$. The set of lines through $\cA$ that lie in the plane $\Pi$ is
denoted by $[\cA,\Pi]$, and it is called the \emph{absolute pencil}.
\end{defi}

Take notice that here we adopt the phrase `absolute' in analogy to the
conventional terminology for Cayley-Klein geometries (see, for example,
\cite{gier-82a}) and not in its meaning for polarities, where a point is called
`absolute' if it is incident with its polar hyperplane. However, we shall
encounter polarities at the very end of this section, and encourage the reader
to compare our results with recent findings in \cite[3.5]{knarr+s-09a},
\cite[Thm.~6.4]{knarr+s-13a}, and \cite[2.7,~2.8]{knarr+s-14a} about polarities
with a surprisingly small set of `absolute' points.

\begin{prop}\label{prop:F-rational}
The absolute plane $\Pi$ of the extended projective space\/
$\bPLL\cong\bP_3(L)$ has the following properties:
\begin{enumerate}

\item\label{prop:F-rational-1} The absolute plane contains no $F$-rational
    points.

\item\label{prop:F-rational-2} Each point of the absolute plane is incident
    with at most one $F$-rational line.

\item\label{prop:F-rational-3} Let $Fa$ and $Fb$ be distinct points of\/
    $\bPLF\cong\bP_3(F)$ and let $M=Fa\oplus Fb$ be the line joining them.
    Then the $F$-rational line $M\LK$ meets the absolute plane at the point
    $L(a\ot b + b\ot a)$.

\end{enumerate}
\end{prop}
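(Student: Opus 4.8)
The plan is to handle the three assertions in turn, in each case reducing matters to the single $L$-linear form $\pi$ of \eqref{eq:pi}, whose kernel is the absolute plane $\Pi$. Throughout I use that the $F$-rational points are exactly the $L(1\ot x)$ with $x\in L\setminus\{0\}$, that a scalar acts on the \emph{first} factor of a pure tensor, and that $L$ is a commutative field of characteristic two. For \eqref{prop:F-rational-1} I would simply evaluate $\pi$ on a representative of an $F$-rational point: since $(1\ot x)^\pi=x\neq0$ by the definition of $\pi$, the vector $1\ot x$ is not in $\ker\pi=\Pi$, so no $F$-rational point lies in the absolute plane. (Equivalently, its norm $x^2$ is non-zero.)

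For \eqref{prop:F-rational-2} I would use that extension of the ground field commutes with intersection, so that $M\LK\cap N\LK=(M\cap N)\LK$ for subspaces $M,N$ of $L_F$; this follows by choosing an $F$-basis adapted to $M\cap N$, or from the flatness of $L$ over $F$. Suppose a point of $\Pi$ lay on two distinct $F$-rational lines $M\LK$ and $N\LK$. Two distinct lines of $\bPLF\cong\bP_3(F)$ are either skew, so that $M\LK\cap N\LK=\{0\}$ and no common point exists, or they meet in a point $Fc$, so that $M\LK\cap N\LK=L(1\ot c)$ is $F$-rational and hence disjoint from $\Pi$ by \eqref{prop:F-rational-1}. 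Either way the assumed common point in $\Pi$ cannot exist, so each point of $\Pi$ lies on at most one $F$-rational line.

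For \eqref{prop:F-rational-3} I would first note that $M\LK$ cannot lie inside $\Pi$, being spanned by the $F$-rational points $L(1\ot a)$ and $L(1\ot b)$, neither of which lies in $\Pi$ by \eqref{prop:F-rational-1}. A line not contained in a plane of $\bPLL\cong\bP_3(L)$ meets that plane in exactly one point, so $M\LK\cap\Pi$ is a single point and it remains only to locate it. Writing a general vector of $M\LK$ as $\alpha\ot a+\beta\ot b$ with $\alpha,\beta\in L$ (using that $\alpha(1\ot a)=\alpha\ot a$), I apply $\pi$ to get $\alpha a+\beta b\in L$; as $a,b$ are non-zero in the field $L$ this vanishes precisely when $(\alpha,\beta)$ is an $L$-multiple of $(b,a)$, since $ba+ab=2ab=0$ in characteristic two. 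The corresponding vector is $b\ot a+a\ot b=a\ot b+b\ot a$, which is non-zero because the $F$-independence of $a,b$ makes $a\ot b$ and $b\ot a$ distinct basis vectors of $\LFL$; this identifies the intersection point as $L(a\ot b+b\ot a)$. The whole argument is essentially routine; the only steps needing a little care are the compatibility of ground-field extension with intersection in \eqref{prop:F-rational-2} and the non-vanishing of $a\ot b+b\ot a$, so that is where I would concentrate the verification.
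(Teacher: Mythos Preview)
Your proof is correct and follows essentially the same approach as the paper: part~(i) is identical, part~(ii) reduces to~(i) via the $F$-rationality of the intersection of two $F$-rational lines, and part~(iii) locates the intersection point by evaluating~$\pi$ and checking that $a\ot b+b\ot a$ is non-zero. The only differences are cosmetic---you make the compatibility of base change with intersection explicit (and handle the skew case separately), and you first argue abstractly that $M\LK\cap\Pi$ is a single point before exhibiting it, whereas the paper directly verifies that $L(a\ot b+b\ot a)$ lies in both $\Pi$ and $M\LK$.
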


\begin{proof}
\emph{Ad}~\eqref{prop:F-rational-1}. Assume to the contrary that there exists
an $F$-rational point in $\Pi$. Such a point has the form $L(1\ot c)$ with
$c\in L\setminus\{0\}$, whence $(1\ot c)^\pi=c\neq 0$ yields a contradiction.
\par
\emph{Ad}~\eqref{prop:F-rational-2}. Suppose that a point of $\Pi$ were on two
distinct $F$-rational lines, say $M\LK$ and $N\LK$. Thus $M\LK\cap N\LK$ would
be an $F$-rational point of $\Pi$, which is impossible by
\eqref{prop:F-rational-1}.
\par
\emph{Ad}~\eqref{prop:F-rational-3}. Since $a$ and $b$ are linearly independent
in $L_F$, the tensors $a\ot b$ and $b\ot a$ can be extended to a basis of
$(\LFL)_F$. Hence their sum is non-zero, and $L(a\ot b + b\ot a)$ is a point of
$\bPLL$. From $(a\ot b + b\ot a)^\pi=2ab=0$, this point belongs to $\Pi$, and
$a\ot b + b\ot a = a(1\ot b) + b(1\ot a)$ implies that it is on the
$F$-rational line $M\LK$.
\end{proof}

\begin{rem}
From Proposition~\ref{prop:F-rational}, an injective mapping of the set of
lines of $\bP(L_F)$ into the set of points of the absolute plane $\Pi$ is given
by $M\mapsto \Pi\cap M\LK$. Its algebraic description is based on the
alternating $F$-bilinear mapping of $L_F\times L_F$ to $(\LFL)_F$ sending
$(x,y)$ to $x\ot y+y\ot x$. By the universal property of the tensor product,
this bilinear mapping gives rise to the \emph{alternation operator}
\begin{equation*}
    (\LFL)_F\to (\LFL)_F: x\ot y\mapsto x\ot y+y\ot x .
\end{equation*}
The image of this $F$-linear operator can be identified with the exterior
square $\bigwedge^2 L_F $, whence the points of the form $L(x\ot y+y\ot
x)=L(x\wedge y)$ provide a model of the Klein quadric (over $F$) within the
projective plane $\Pi$ (over $L$). A detailed description of this model is not
within the scope of this article.
\end{rem}

The following is taken from \cite[Satz~1]{karz+k+s-73a}:
\begin{defi}\label{def:clifford}
Let $(M,N)$ be a pair of lines of the projective space $\bPLF\cong\bP_3(F)$. We
say that $M$ is \emph{Clifford parallel} (or shortly: \emph{parallel}) to $N$
if there is an element $b\in L\setminus\{0\}$ such that $N=Mb$. In this case we
write $M\parallel N$.
\end{defi}
Due to the commutativity of $L$, the `left' and 'right' parallel relations from
\cite{karz+k+s-73a} coincide here. The relation $\parallel$ defines indeed a
parallelism on $\bPLF$, i.e., for each line $M$ and each point $Fa$ there is a
unique line $N$ with $Fa\subset N\parallel M$. The parallel class of $M$ is
written as $\cS(M)$.
\par
The Clifford parallelism on the line set of $\bPLF$ satisfies the \emph{double
space axiom} \cite[p.~154]{karz+k+s-73a}. In our setting this result reads as
follows: Given lines $M$ and $N$ with a common point, say $Fa$, and arbitrary
points on $M$ and $N$, say $Fb$ and $Fc$, the unique line $M'\parallel M$
through $Fc$ has a point in common with the unique line $N'\parallel N$ through
$Fb$. Let us repeat the easy proof. From $M'=Ma^{-1}c$ and $N'=Na^{-1}b$
follows that $Fd$ with $d:=a^{-1}bc$ is a common point of $M'$ and $N'$. If the
lines $M,M',N,N'$ are mutually distinct then $Fa,Fb,Fc,Fd$ constitute a
tetrahedron, which one might call a \emph{skew parallelogram}. It seems worth
noting that---in analogy to a parallelogram in an affine plane over a field of
characteristic two---also here the remaining two lines $Fa\oplus Fd$ and
$Fb\oplus Fc$ are parallel to each other. The validity of the double space
axiom implies the following result:
\begin{prop}\label{prop:regular}
All parallel classes of the Clifford parallelism on\/ $\bPLF\cong\bP_3(F)$ are
regular spreads.
\end{prop}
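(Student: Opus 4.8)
The plan is to identify each parallel class with a Desarguesian spread coming from a quadratic subfield of $L$, and then to invoke the classical fact that such spreads are regular. First I would note that, since $\parallel$ has already been shown to be a parallelism, through every point of $\bPLF$ there passes exactly one member of a given class $\cS(M)$; thus each $\cS(M)$ is automatically a spread, and only regularity remains to be established. Because $\cS$ is constant on parallel classes and every line of $\bPLF$ contains some point $Fa$, I may replace $M$ by the parallel line $Ma^{-1}$, which contains $aa^{-1}=1$. Hence there is no loss of generality in assuming $1\in M$, say $M=F\cdot 1\oplus F m$ with $m\in L\setminus F$.

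By the Global assumption we have $m^2\in F$, so the $F$-span $K:=F\cdot 1\oplus F m=F(m)$ is closed under multiplication (and, being a finite integral domain contained in the field $L$, also under inversion). Therefore $K$ is a subfield of $L$ with $[K:F]=2$, and $M=K$ as a set. Consequently $[L:K]=2$, and I would regard $L$ as a two-dimensional vector space over $K$. For each $b\in L\setminus\{0\}$ the parallel line $Mb=Kb$ is precisely the one-dimensional $K$-subspace of $L$ spanned by $b$, and as $b$ ranges over $L\setminus\{0\}$ these exhaust all one-dimensional $K$-subspaces. By Definition~\ref{def:clifford} this yields
\[
    \cS(M)=\{Kb\mid b\in L\setminus\{0\}\},
\]
that is, $\cS(M)$ is exactly the spread of $\bPLF$ whose lines are the one-dimensional $K$-subspaces of the $K$-plane $L$.

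It then remains to recall the classical theorem that the spread of $\bP_3(F)$ arising in this way from a quadratic field extension $K/F$ — the Desarguesian spread of $K/F$ — is regular; see, for instance, \cite{hirsch-98} and the references therein. The decisive step is thus not a computation but the structural observation of the first two paragraphs: once one recognises that a normalised line $M$ is genuinely a subfield $F(m)$ of $L$ rather than a mere $F$-subspace, the reduction to a quadratic extension is immediate and regularity follows from a standard result. This is also consistent with the remark preceding the statement: the skew-parallelogram closure furnished by the double space axiom is the geometric reflection of the $K$-linear structure just exhibited, so an alternative and more synthetic argument could instead derive the regulus-closure of $\cS(M)$ directly from that axiom.
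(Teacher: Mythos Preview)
Your argument is correct, with one small slip of language: ``finite integral domain'' should read ``integral domain that is finite-dimensional over $F$'' --- the field $F$ is necessarily infinite here, so $K$ is not a finite ring; but the intended conclusion (that a finite-dimensional $F$-subalgebra of $L$ without zero divisors is a field) stands.

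Your route, however, is genuinely different from the paper's. You normalise $M$ to pass through the point $F\cdot 1$, recognise $M$ as an intermediate field $K=F(m)$ with $[K:F]=2$, identify $\cS(M)$ with the set of one-dimensional $K$-subspaces of $L_K$, and then appeal to the classical fact that such a Desarguesian spread is regular. The paper instead gives a short synthetic proof directly from the double space axiom (the very alternative you allude to in your final sentence): given three mutually parallel lines $M,M_1,M_2$ and the regulus $\cR$ they determine, one picks a transversal $N$ in the opposite regulus and uses the double space axiom to show that each line parallel to $N$ through a point of $M$ must also meet $M_1$ and $M_2$, hence equals a line of the opposite regulus; a second application then forces all of $\cR$ into $\cS(M)$. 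Your approach has the merit of exposing the underlying structure --- each parallel class is literally the Desarguesian spread of a quadratic intermediate field, which also anticipates the role of $F[i]$ in Theorem~\ref{thm:punkte} --- at the cost of invoking an external result. The paper's proof is entirely self-contained and uses nothing beyond what was just established about skew parallelograms.
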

\begin{proof}
Let $M,M_1,M_2$ be mutually distinct parallel lines. So there is a unique
regulus, say $\cR$, containing them. Furthermore, there exists a line $N$ in
the opposite regulus of $\cR$. Through each point of $M$ there is a unique line
$N'\parallel N$ and a unique line $N''$ of the opposite regulus of $\cR$. By
the double space axiom, $N'$ meets $M_1$ and $M_2$ so that $N'=N''$.
Consequently, the opposite regulus of $\cR$ consists of mutually parallel
lines. Applying the double space axiom once more yields that all lines of the
regulus $\cR$ are in the parallel class $\cS(M)$.
\end{proof}

Each line of the projective space $\bPLF$ has a unique parallel line, say $K$,
through the point $F\cdot 1=F$. Upon choosing any $i\in K\setminus F$, the line
$K$ takes the form $K=F\oplus Fi$. By our global assumption on the fields $L$
and $F$ from Section~\ref{se:prelim}, we have $i^2\in F$. So $K$ is the
\emph{intermediate field}\/ of $F$ and $L$ that arises from $F$ by adjoining
the element $i$. We denote the field $K$ by $F[i]$ rather than $F(i)$ in order
to avoid confusion with the subspace $Fi$ of $L_F$. Conversely, any
intermediate field $K$ satisfying $F\varsubsetneqq K\varsubsetneqq L$ is a line
through the point $F$, since $[L:K]=4$ forces $[K:F]=2$.
\par
If $K$ is an intermediate field as above then we may view $L_{(K)}:=(K\ot_F
L)_K$ as a vector space over $K$ which extends $L_F$. This vector space will
usually not be treated as a structure in its own right, but as a substructure
of $\LL$. Thereby we utilise that $\LL$ arises from $L_{(K)}$ (up to a
canonical identification) by extending the ground field from $K$ to $L$
\cite[pp.~278--279]{bour-89a}. Those points of the projective space $\bPLL$
that have at least one generating vector in $K\ot_F L$ are named
\emph{$K$-rational}. A subspace of $\LL$ is called \emph{$K$-rational} if it is
spanned by its $K$-rational points. We are now in a position to describe the
parallel class of the line $K=F[i]$ in terms of the absolute plane.

\begin{thm}\label{thm:punkte}
Let $i\in L\setminus F$. Then the following assertions hold:
\begin{enumerate}

\item\label{thm:punkte-1} The absolute plane $\Pi$ of\/
    $\bPLL\cong\bP_3(L)$ contains a unique $F[i]$-rational line, namely the
    line joining the $F[i]$-rational point $L(1\ot i + i\ot 1)$ with the
    absolute point $\cA$.

\item\label{thm:punkte-2} The absolute point $\cA$ is not $F[i]$-rational.

\item\label{thm:punkte-3} A line $M$ of\/ $\bPLF\cong\bP_3(F)$ is Clifford
    parallel to the line $F[i]=F\oplus Fi$ if, and only if, the extended
    line $M\LK$ meets the absolute plane $\Pi$ at an $F[i]$-rational point.
\end{enumerate}
\end{thm}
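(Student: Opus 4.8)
The plan is to work throughout in the coordinates of \eqref{eq:basisLL}, so that (with $K:=F[i]$) a point of $\bPLL$ is $K$-rational exactly when it can be rescaled to have all four coordinates in $K$, and $\Pi$ is the plane $z_0+iz_1+jz_2+kz_3=0$. I would treat \eqref{thm:punkte-1} and \eqref{thm:punkte-2} by locating all $K$-rational points of $\Pi$ at once. Since $\pi$ is $L$-linear and already $1\ot y\mapsto y$ shows that its restriction to the $4$-dimensional $K$-space $K\otf L$ maps onto $L$, the kernel $W:=\Pi\cap(K\otf L)$ has $\dim_K W=4-[L:K]=2$. A point $Lz\subset\Pi$ is $K$-rational precisely when it has a generator in $K\otf L$, that is, in $W$; hence the $K$-rational points of $\Pi$ are exactly the points $Lw$ with $0\neq w\in W$, and they all lie on the single line $W\LK$ (the $L$-span of $W$, of $L$-dimension $2$, because $K$-independent vectors remain $L$-independent under the extension of the ground field from $K$ to $L$). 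This gives the uniqueness in \eqref{thm:punkte-1}: a $K$-rational line of $\Pi$ is spanned by $K$-rational points of $\Pi$ and so must coincide with $W\LK$.

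To identify $W\LK$ explicitly I would check that $p=1\ot i+i\ot 1$ and $1\ot k+i\ot j$ both lie in $W$: each has its first tensor-factors in $K$, and $\pi(p)=2i=0$, $\pi(1\ot k+i\ot j)=k+ij=2k=0$. Their coordinate vectors are $(i,1,0,0)$ and $(0,0,i,1)$, and since $1\ot k+i\ot j=jp+r$ (a short computation using $ji=k$ and characteristic two), the points $Lp$ and $L(1\ot k+i\ot j)$ span $Lp\oplus Lr$. Thus $W\LK=Lp\oplus Lr$ is the join of $L(1\ot i+i\ot 1)$ with the absolute point $\cA=Lr$ of Lemma~\ref{lem:annihil}, proving \eqref{thm:punkte-1}. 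For \eqref{thm:punkte-2} I read off from Lemma~\ref{lem:annihil} that $\cA=Lr$ has coordinate vector $(k,j,i,1)$; any rescaling placing all coordinates into $K$ would, from the last two entries, force $j\in K$, contradicting $j\notin F[i]$.

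For \eqref{thm:punkte-3}, recall from Definition~\ref{def:clifford} that $M\parallel F[i]$ means $M=F[i]\,c=Fc\oplus F(ic)$ for some $c\in L\setminus\{0\}$. In the forward direction I would apply Proposition~\ref{prop:F-rational}\eqref{prop:F-rational-3} with $a=c$, $b=ic$ to get $\Pi\cap M\LK=L(c\ot ic+ic\ot c)$, and then use the factorisation $c\ot ic+ic\ot c=c\,(1\ot ic+i\ot c)$; since $1\ot ic+i\ot c$ has its first factors $1,i$ in $K$ it lies in $K\otf L$, so the intersection point $L(1\ot ic+i\ot c)$ is $K$-rational. For the converse I would combine injectivity with surjectivity. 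The map $\Phi\colon M\mapsto\Pi\cap M\LK$ is injective on the lines of $\bPLF$ (the Remark following Proposition~\ref{prop:F-rational}), and by the forward direction it sends the parallel class $\cS(F[i])=\{F[i]\,c\mid c\in L\setminus\{0\}\}$ into the $K$-rational points $\{Lw\mid 0\neq w\in W\}$ of $\Pi$. Expanding $1\ot ic+i\ot c$ for $c=c_0+c_1i+c_2j+c_3k$ and using $i^2\in F$ shows that its coordinate vector equals $(iz_1,z_1,iz_3,z_3)$ with $z_1=c_0+c_1i$, $z_3=c_2+c_3i$; as $c$ runs through $L\setminus\{0\}$ the pair $(z_1,z_3)$ runs through $K^2\setminus\{0\}$, and since $W=\{(iz_1,z_1,iz_3,z_3)\mid z_1,z_3\in K\}$ this shows $\Phi$ maps $\cS(F[i])$ onto all $K$-rational points of $\Pi$. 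Hence if $\Phi(M)$ is $K$-rational it equals $\Phi(M')$ for some $M'\in\cS(F[i])$, and injectivity forces $M=M'\parallel F[i]$.

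The bookkeeping behind \eqref{thm:punkte-1}, \eqref{thm:punkte-2}, and the forward half of \eqref{thm:punkte-3} is routine once the factorisation $c\ot ic+ic\ot c=c\,(1\ot ic+i\ot c)$ is spotted. The crux is the converse of \eqref{thm:punkte-3}: one cannot conveniently ``invert'' $L(a\ot b+b\ot a)$ to recover a parallel line, so the argument instead leans on the injectivity of $M\mapsto\Pi\cap M\LK$ together with the surjectivity of $\Phi|_{\cS(F[i])}$ onto the $K$-rational points of $\Pi$. Establishing that surjectivity via the explicit parametrisation $c\mapsto L(1\ot ic+i\ot c)$ is where the real content lies.
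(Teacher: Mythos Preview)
Your argument is correct. The overall architecture matches the paper's---identify the $K$-rational locus $W=\Pi\cap(K\otf L)$, show it spans the line $\cA\oplus Lp$, check $\cA\notin W$, and for the converse of \eqref{thm:punkte-3} combine surjectivity of $c\mapsto L(1\ot ic+i\ot c)$ onto the $K$-rational points with the uniqueness of the $F$-rational line through a point of $\Pi$---but the tactics differ. The paper parametrises $W$ multiplicatively as $\{(1\ot y)p\mid y\in L\}$ using the algebra structure of $\LL$ (cf.\ \eqref{eq:schnitt}, \eqref{eq:schnittexplizit}); this single formula then does all the work for \eqref{thm:punkte-1}, \eqref{thm:punkte-2}, and the converse of \eqref{thm:punkte-3}. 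You replace that device by a rank--nullity count ($\dim_K W=4-[L:K]=2$) plus explicit coordinates, which is arguably more elementary and makes the uniqueness in \eqref{thm:punkte-1} immediate without listing the four generators \eqref{eq:fano}. One small wording slip: in \eqref{thm:punkte-2} it is the \emph{fourth} coordinate that forces the scaling factor into $K$ and then the \emph{second} coordinate that yields $j\in K$; ``the last two entries'' $(i,1)$ alone only give $\lambda\in K$.
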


\begin{proof}
We extend $1,i$ to a basis $(1,i,j,k)$ of $L_F$ as in \eqref{eq:basisL} and
introduce the associated basis $(1\ot 1,p,q,r)$ of $\LL$ from
\eqref{eq:idealbasis}.
\par
\emph{Ad}~\eqref{thm:punkte-1}. A point of $\Pi$ is $F[i]$-rational precisely
when it can be generated by a vector that belongs to the set $\big(F[i]\ot_F
L\big)\cap \Pi$. We claim that
\begin{equation}\label{eq:schnitt}
    \big(F[i]\ot_F L\big)\cap \Pi = \{ (1\ot y) p \mid y\in L \} .
\end{equation}
Since $p$ is in the ideal $\Pi$, so are all elements from the set on the right
hand side of \eqref{eq:schnitt}. According to \eqref{eq:mult}, we have
\begin{equation}\label{eq:schnittexplizit}
   (1\ot y) p = (1\ot y) (1\ot i + i\ot 1) = 1\ot iy + i \ot y
   \mbox{~~for all~~} y\in L ,
\end{equation}
whence the right hand side of \eqref{eq:schnitt} is a subset of $F[i]\ot_F L$.
Conversely, for any $g$ from the set on the left hand side of
\eqref{eq:schnitt} there are $a,b$ in $L$ with $g=1\ot a+i\ot b$. Now
$g\in\Pi=\ker\pi$ yields $a=ib$, and $g=(1\ot b)p$ follows as in
\eqref{eq:schnittexplizit}. This verifies equation \eqref{eq:schnitt}.
\par
We infer from \eqref{eq:schnitt} that the four vectors
\begin{equation}\label{eq:fano}
    \begin{array}{l@{}lclcl}
    (1\ot 1)&p &=& 1\ot i + i\ot 1 &=& p,\\
    (1\ot i)&p &=& i^2\ot 1+i\ot i    &= &ip \\
    (1\ot j)&p &=& 1\ot k + i\ot j &=&jp+r ,\\
    (1\ot k)&p &=& i^2\ot j +i\ot k     &=& i(jp+r)
    \end{array}
\end{equation}
are all in $\big(F[i]\ot_F L\big)\cap \Pi$. The first and the third vector from
\eqref{eq:fano} are linearly independent over $L$, since $p$ and $r$ belong to
the basis \eqref{eq:idealbasis} of $\LL$. Writing $(y_0,y_1,y_2,y_3) \in F^4$
for the coordinates with respect to the basis \eqref{eq:basisL} of an arbitrary
$y\in L$ yields therefore
\begin{equation}\label{eq:sub}
    (1\ot y)p =   (y_0+iy_1)p   + (y_2+iy_3) (jp+r).
\end{equation}
This shows that the $F[i]$-rational points of $\Pi$ comprise an $F[i]$-subline
of the line $Lp\oplus L(jp+r)= \cA\oplus Lp$. So $\cA\oplus Lp$ is the only
$F[i]$-rational line in $\Pi$. Cf.\ also Figure~\ref{abb:fano} below.
\par
\emph{Ad}~\eqref{thm:punkte-2}. Clearly $r=jp+ 1(jp+r)$. This is the only
possibility to write $r$ as a linear combination with coefficients in $L$ of
the (linearly independent) vectors $p$ and $jp+r$ . Thus $j\notin F[i]$ and
\eqref{eq:sub} imply that there is no $y\in L$ such that $(1\ot y)p$ is a
non-zero vector of $Lr$. So the absolute point $\cA=Lr$ is not $F[i]$-rational.
\par
\emph{Ad}~\eqref{thm:punkte-3}. Let $M$ be a line of $\bPLF$. If $M\parallel
F[i]$ then there is a $b\in L\setminus\{0\}$ with $M=F[i]\cdot b$. From
$F[i]=F\oplus Fi$ follows $M=Fb \oplus F(ib)$. By
Proposition~\ref{prop:F-rational}~\eqref{prop:F-rational-3}, the extended line
$M\LK$ meets $\Pi$ at the point $L(b\ot ib + ib\ot b)$. This point is
$F[i]$-rational, because it can be rewritten as $L(1\ot ib + i\ot b)$.
\par
Conversely, suppose that $M\LK\cap\Pi$ is an $F[i]$-rational point, say $Lm$.
By \eqref{eq:schnittexplizit}, we may assume $m=1\ot ib + i\ot b$ for some
$b\in L\setminus\{0\}$.
Proposition~\ref{prop:F-rational}~\eqref{prop:F-rational-3} shows that the
$F$-rational line $\big(F[i]\cdot b\big)\LK$ passes through the point $Lm$.
From Proposition~\ref{prop:F-rational}~\eqref{prop:F-rational-2}, there is
precisely one $F$-rational line through $Lm$. So we obtain $M=F[i]\cdot b$ or,
said differently, $M\parallel F[i]$.
\end{proof}

\begin{rem}
The description of the parallel class $\cS(F[i])$ from Theorem~\ref{thm:punkte}
can be found in the literature in various guises. It is a special case of the
description of the spread that arises from the field extension $F[i]/F$
according to \cite[Theorem~2]{havl-94a}. (This spread in turn yields a pappian
projective plane whose underlying field is isomorphic to the intermediate field
$F[i]$.) Taking into account that $\cS(F[i])$ is a regular spread, our result
is covered by \cite[Theorem~1.2]{beu+ue-93a}. See also \cite{blunck+p-08a} and
\cite{debr-10a} for related work. In order to fully establish the link with
either of the cited articles, it is sufficient to consider the
\emph{intermediate projective space}\/ $\bP(L_{(F[i])})\cong \bP_3(F[i])$. This
space contains the initial space $\bPLF\cong\bP_3(F)$ as a \emph{Baer
subspace}. The $F[i]$-subline of $\cA\oplus Lp$ mentioned in the proof above is
an \emph{indicator set} of the spread $\cS(F[i])$. It constitutes the `visible'
part of the absolute plane within the intermediate projective space, whereas
the absolute point remains entirely `invisible'.
\par
The images of parallel classes under the Klein mapping are described in
\cite[Lemma~1]{havl-94b}: These are elliptic quadrics (intersections of the
Klein quadric by solids) with the following particular property: The tangent
planes of any such quadric have a common line.

\end{rem}
The essential role of the absolute pencil will come into effect in the next
result, where we describe our Clifford parallelism in terms of the extended
space $\bPLL\cong\bP_3(L)$.

\begin{thm}\label{thm:clifford}
Let $M$ and $N$ be lines of the projective space $\bPLF\cong\bP_3(F)$. Then $M$
and $N$ are Clifford parallel if, and only if, there exists a line of the
absolute pencil $[\cA,\Pi]$ that meets the extended lines $M\LK$ and $N\LK$.
\end{thm}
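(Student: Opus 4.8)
The plan is to attach to every line of $\bPLF$ a single point of the absolute plane and then to recognise Clifford parallelism through the absolute pencil based at that point. First I would note that for any line $M$ of $\bPLF$ the extended line $M\LK$ is $F$-rational, so by Proposition~\ref{prop:F-rational}~\eqref{prop:F-rational-1} it cannot lie inside $\Pi$; since a line and a plane of $\bPLL\cong\bP_3(L)$ always meet, the intersection $P_M:=\Pi\cap M\LK$ is a single point, equal to $L(a\ot b+b\ot a)$ when $M=Fa\oplus Fb$ by Proposition~\ref{prop:F-rational}~\eqref{prop:F-rational-3}. Fixing the unique line $F[i]$ of the class $\cS(M)$ through $F$, Theorem~\ref{thm:punkte}~\eqref{thm:punkte-3} makes $P_M$ an $F[i]$-rational point, while Theorem~\ref{thm:punkte}~\eqref{thm:punkte-2} shows that $\cA$ is not $F[i]$-rational; hence $P_M\neq\cA$, and the join $\cA\oplus P_M$ is a genuine member of the absolute pencil $[\cA,\Pi]$.

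For the necessity of the condition, assume $M\parallel N$ and let $F[i]$ be their common parallel through $F$. By Theorem~\ref{thm:punkte}~\eqref{thm:punkte-3} and~\eqref{thm:punkte-1} the points $P_M$ and $P_N$ are $F[i]$-rational and therefore both lie on the line $\cA\oplus L(1\ot i+i\ot 1)$, which belongs to $[\cA,\Pi]$. This single line meets $M\LK$ in $P_M$ and $N\LK$ in $P_N$, which is exactly what is wanted.

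For the sufficiency, suppose some $\ell\in[\cA,\Pi]$ meets both $M\LK$ and $N\LK$. Since $\ell\subset\Pi$ and $\Pi\cap M\LK=\{P_M\}$, the point shared by $\ell$ and $M\LK$ must be $P_M$, and likewise $\ell$ runs through $P_N$. Writing $F[i]$ and $F[i']$ for the parallels of $M$ and $N$ through $F$, Theorem~\ref{thm:punkte}~\eqref{thm:punkte-1} presents the lines $\cA\oplus L(1\ot i+i\ot 1)$ and $\cA\oplus L(1\ot i'+i'\ot 1)$; each carries $\cA$ together with $P_M$, respectively $P_N$, so by uniqueness of the join of two distinct points each coincides with $\ell$. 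Thus the two lines are equal, and the proof reduces to showing that this equality forces $F[i]=F[i']$.

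This injectivity of the correspondence $F[i]\mapsto\cA\oplus L(1\ot i+i\ot 1)$ is the step I expect to be the main obstacle, and I would dispatch it by a direct coordinate computation. Completing $1,i$ to a basis $(1,i,j,k)$ as in \eqref{eq:basisL} and using the $L$-basis \eqref{eq:basisLL}, one writes $i'=c_0+c_1 i+c_2 j+c_3 k$ with $c_m\in F$; since the characteristic is two, the generator $1\ot i'+i'\ot 1$ has coordinates $(c_1 i+c_2 j+c_3 k,\,c_1,\,c_2,\,c_3)$, whereas $r$ and $p=1\ot i+i\ot 1$ have coordinates $(k,j,i,1)$ and $(i,1,0,0)$. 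Imposing $1\ot i'+i'\ot 1\in Lr\oplus Lp$ and comparing the third and fourth coordinates gives $c_2=c_3 i$ with $c_2,c_3\in F$; as $i\notin F$ this forces $c_2=c_3=0$, so $i'\in F[i]$ and hence $F[i']=F[i]$ by comparing degrees over $F$. Therefore $M\parallel F[i]=F[i']\parallel N$, and transitivity of the parallelism yields $M\parallel N$, which finishes the argument.
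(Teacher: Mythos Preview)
Your proof is correct and rests on the same ingredients as the paper's: both directions invoke Theorem~\ref{thm:punkte} to place the intersection points $P_M,P_N$ on the appropriate $F[i]$-rational pencil line and to separate them from $\cA$, and both reduce the converse to the injectivity of the assignment $F[i]\mapsto\cA\oplus L(1\ot i+i\ot 1)$. The one difference is how that injectivity is established. The paper argues the converse contrapositively: given $M\not\parallel N$, it picks representatives $i,j$ of the two parallel classes through $F$, observes that $1,i,j$ are $F$-independent, and uses \emph{both} of them in the basis \eqref{eq:basisL}; then $P_M\in\cA\oplus Lp$ and $P_N\in\cA\oplus Lq$ lie on manifestly distinct pencil lines because $p,q,r$ belong to the basis \eqref{eq:idealbasis}. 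You instead fix the basis from $i$ alone and express $i'$ in coordinates, which forces a short computation. Both routes are valid; the paper's choice of basis absorbs your ``main obstacle'' into a one-line observation, while your argument has the virtue of making the injectivity explicit and independent of a clever basis choice.
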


\begin{proof}
First, let us assume $M\parallel N$. So there exists an $i\in L\setminus F$
such that $M\parallel F[i] \parallel N$. By
Theorem~\ref{thm:punkte}~\eqref{thm:punkte-3}, the extended lines $M\LK$ and
$N\LK$ meet the absolute plane $\Pi$ at $F[i]$-rational points. Recall the
notation $p=1\ot i+i\ot 1$ from \eqref{eq:idealbasis}. Since $\cA\oplus Lp$ is
the only $F[i]$-rational line in $\Pi$ according to
Theorem~\ref{thm:punkte}~\eqref{thm:punkte-1}, each of the $F[i]$-rational
points ${M\LK}\cap\Pi$ and ${N\LK}\cap\Pi$ must be incident with the line
$\cA\oplus Lp\in [\cA,\Pi]$.
\par
\begin{figure}[!ht]\unitlength1.2cm
  \centering
  \begin{picture}(3.43,4)\footnotesize
    \put(0,0.5){\includegraphics[height=8\unitlength]{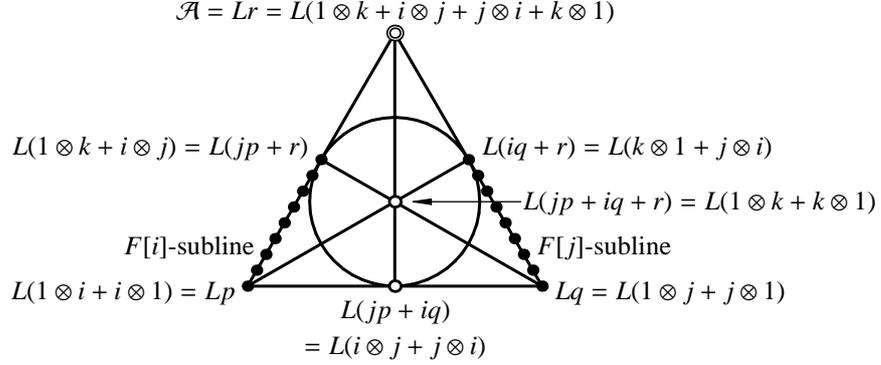}}
    \put(1.75,3.6){\makebox[0cm][c]{$\cA=Lr=L(1\ot k + i\ot j+ j\ot i + k\ot 1)$}}
    \put(2.7,2.1){$L(iq+r) =L(k\ot 1+ j\ot i)$}
    \put(0.8,2.1){\makebox[0cm][r]{$L(1\ot k + i\ot j)=L(jp+r)$}}
    \put(3.15,1.5){$L(jp+iq+r) = L(1\ot k + k\ot 1)$}
    \put( 0.2,1.){\makebox[0cm][r]{$F[i]$-subline}}
    \put(3.3,1.){$F[j]$-subline}
    \put(1.75,0.3){\makebox[0cm][c]{$L(jp+iq)$}}
    \put(1.75,-0.1){\makebox[0cm][c]{$=L(i\ot j+j\ot i)$}}
    \put(0,0.5){\makebox[0cm][r]{$L(1\ot i + i\ot 1)=Lp$}}
    \put(3.5,0.5){$Lq=L(1\ot j + j\ot 1)$}
  \end{picture}
   \caption{Sublines in the absolute plane $\Pi$.}\label{abb:fano}
\end{figure}
Next, we assume $M\not\parallel N$. So there are $i,j\in L\setminus F$ such
that $M\parallel F[i]$ and $N\parallel F[j]$. From $F[i]\not\parallel F[j]$
follows that $1,i,j$ are linearly independent over $F$. We extend these
elements to a basis $(1,i,j,k)$ of $L_F$ as in \eqref{eq:basisL} and introduce
then the basis $(1\ot 1,p,q,r)$ of $\LL$ from \eqref{eq:idealbasis}. By
Theorem~\ref{thm:punkte}~\eqref{thm:punkte-1}, the point ${M\LK}\cap\Pi$
belongs to the subline of $F[i]$-rational points of the line $\cA\oplus Lp$.
Moreover, from Theorem~\ref{thm:punkte}~\eqref{thm:punkte-2}, the absolute
point $\cA$ is not $F[i]$-rational, whence ${M\LK}\cap\Pi\neq\cA$. Hence
$\cA\oplus Lp$ is the \emph{only}\/ line of the absolute pencil $[\cA,\Pi]$
that meets $M\LK$. Exchanging $M$ with $N$ we obtain \emph{mutatis mutandis}:
${N\LK}\cap\Pi$ is an $F[j]$-rational point on the line $\cA\oplus Lq$, the
absolute point $\cA$ is not $F[j]$-rational, and therefore $\cA\oplus Lq$ is
the \emph{only}\/ line of the absolute pencil $[\cA,\Pi]$ that meets $N\LK$.
Since $Lp,Lq,\cA=Lr$ are the vertices of a triangle, there is no line of the
absolute pencil that meets simultaneously the extended lines $M\LK$ and $N\LK$.
See Figure~\ref{abb:fano}, where also two additional points are depicted in
order to obtain a Fano subplane of the absolute plane $\Pi$.
\end{proof}

The approach to the Clifford parallelism in Definition~\ref{def:clifford} makes
use of the group of \emph{Clifford translations}. These are projective
collineations that arise from the multiplication maps $\mu_b$ as in
\eqref{eq:mu_b}, subject to the condition $b\neq 0$. Due to our global
assumption from Section~\ref{se:prelim}, the square of any Clifford translation
is the identical collineation. If $b$ has coordinates $(b_0,b_1,b_2,b_3)\in
F^4$ with respect to an arbitrary basis $(1,i,j,k)$ as in \eqref{eq:basisL}
then the corresponding matrix of $\mu_b$ equals
\begin{equation*}
      \left(\!\!\begin{array}{rrrr}
        b_0         & b_1       & b_2       & b_3 \\
        i^2 b_1     & b_0       & i^2 b_3   & b_2 \\
        j^2 b_2     & j^2 b_3   & b_0       & b_1 \\
        k^2 b_3  & j^2 b_2   & i^2 b_1   & b_0
      \end{array}\!\!\right)  \in \GL_4(F) .
\end{equation*}
The structure of $\mu_b$ becomes more apparent from its extension $\mu_{1\ot
b}$ and by changing to the basis \eqref{eq:idealbasis} which is associated to
\eqref{eq:basisL}. The coordinates of $1\ot b$ with respect to
\eqref{eq:idealbasis} are
\begin{equation*}
    (\underbrace{b_0+b_1i+b_2j+b_3k}_{=\,b},b_1+b_3j,b_2+b_3 i,1)
    =:(b_0',b_1',b_2',b_3')\in L^4 ,
\end{equation*}
and the matrix of $\mu_{1\ot b}$ reads
\begin{equation}\label{eq:multmatrix2}
      \left(\!\!\begin{array}{cccc}
        b'_0         & b'_1       & b'_2       & b'_3 \\
        0           & b'_0       & 0         & b'_2 \\
        0           & 0         & b'_0       & b'_1 \\
        0           & 0         & 0         & b'_0
      \end{array}\!\!\right) \in \GL_4(L).
\end{equation}
The case when $b\neq 0$ is in $F$ does not deserve our interest, since it gives
the identical collineation. Otherwise, we may simplify matters by choosing
w.l.o.g.\ the basis element $i$ equal to the given $b\in L\setminus F$. As a
consequence $b_1=1$ and $b_0=b_2=b_3=0$, which implies that the matrix from
\eqref{eq:multmatrix2} turns into block diagonal form
\begin{equation}\label{eq:multmatrix3}
    \diag\left(
    \begin{pmatrix}i&1\\0&i\end{pmatrix},
    \begin{pmatrix}i&1\\0&i\end{pmatrix}
    \right) \in \GL_4(L).
\end{equation}
From \eqref{eq:multmatrix3} the following observations about the collineation
arising from $\mu_{1\ot i}$ are immediate: The fixed points comprise the line
$\cA\oplus Lp$. A plane is invariant precisely when it contains the line
$\cA\oplus Lp$. The restriction of the collineation to every invariant plane is
an involutory (planar) elation. In particular, the restriction to the absolute
plane $\Pi$ has the absolute point $\cA=Lr$ as its centre.
\begin{rem}
It is straightforward to show (e.g.\ in terms of Pl\"{u}cker coordinates or in
terms of the geometric characterisation from
\cite[vol.~II,~p.~182]{brau-76-12}) that the invariant lines of the
collineation given by \eqref{eq:multmatrix3} constitute a \emph{parabolic
linear congruence}. Furthermore, the $F$-rational lines of this congruence are
exactly the extended lines of the parallel class $\cS(F[i])$.
\end{rem}

Our final aim is to link certain polarities with our Clifford parallelism. Let
$\phi:L\to F$ be an $F$-linear form. Then
\begin{equation}\label{eq:blf}
    \lire_\phi : L\x L\to F : (x,y)\mapsto (xy)^\phi
\end{equation}
is a symmetric $F$-bilinear form satisfying
\begin{equation}\label{eq:konform}
    \li xb,yb\re_\phi=(b^2xy)^\phi=b^2(xy)^\phi = b^2\li x,y\re_\phi
    \mbox{~~for all~~} x,y,b\in L ,
\end{equation}
since $b^2\in F$ holds due to our global assumption from
Section~\ref{se:prelim}. Letting $x=y=1$ in \eqref{eq:konform} shows that the
bilinear form $\lire_\phi$ is \emph{alternating} for $1^\phi=0$. Likewise, the
form turns out to be \emph{anisotropic} for $1^\phi\neq 0$.
\par
From now on let us rule out the zero form $\phi=0$. Then there is a $c\in
L\setminus \ker\phi$, whence for any $a\in L\setminus\{0\}$ we obtain $\li
a,ca^{-1}\re_\phi=c^\phi\neq 0$. So $\lire_\phi$ is non-degenerate and
determines a \emph{projective polarity} $\perp_\phi$ of $\bPLF\cong\bP_3(F)$.
By \eqref{eq:konform}, all Clifford translations commute with this polarity,
which is \emph{null} for $1^\phi=0$ and \emph{elliptic} (i.e., without
self-conjugate points) otherwise. Our null polarities appear (in terms of a
slightly different approach) in \cite[p.~97]{elle+k-61a}. It should also be
noted that our elliptic polarities are \emph{pseudo-polarities} according to
the terminology used in \cite{hirsch-98} and \cite{segr-61a}.

\begin{prop}
For any non-zero $F$-linear form $\phi:L\to F$ the associated polarity
$\perp_\phi$ of\/ $\bPLF\cong\bP_3(F)$ maps every line to a parallel one. If a
line is fixed under $\perp_\phi$ then so are all its parallel lines.
\end{prop}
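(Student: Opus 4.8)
The engine of the proof is the compatibility of the Clifford translations with $\perp_\phi$ already recorded right after \eqref{eq:konform}. Reading \eqref{eq:konform} as $\li xb,yb\re_\phi=b^2\li x,y\re_\phi$ shows that each $\mu_b$ with $b\neq 0$ is a similitude of the form $\lire_\phi$, and hence preserves the conjugacy relation of the polarity. The decisive point is that the multiplier $b^2$ lies in $F^\times$ while $\perp_\phi$ is formed over $F$, so the scaling does not move polar subspaces and one obtains the genuine commutation rule $(S^{\perp_\phi})b=(Sb)^{\perp_\phi}$ for every subspace $S$ of $L_F$. I expect both assertions to flow from this identity, the only delicate point being a geometric description of the parallel class of $F[i]$.

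The second assertion is then immediate. If $M^{\perp_\phi}=M$ and $N\parallel M$, write $N=Mb$ with $b\in L\setminus\{0\}$; the commutation rule gives $N^{\perp_\phi}=(Mb)^{\perp_\phi}=(M^{\perp_\phi})b=Mb=N$, so every parallel line of a fixed line is again fixed.

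For the first assertion I would establish the characterisation that a line $M$ is parallel to $F[i]$ (for $i\in L\setminus F$) if, and only if, it is invariant under $\mu_i$, i.e.\ $Mi=M$. The forward implication follows at once from $M=F[i]c$ together with $F[i]\,i=F[i]$ and the commutativity of $L$. The converse is the step that needs attention: a $\mu_i$-invariant line is stable under all of $F[i]=F\oplus Fi$ and hence is an $F[i]$-submodule of $L$; since $[L:F[i]]=2$ this submodule is one-dimensional over $F[i]$, so $M=F[i]\,d$ for any nonzero $d\in M$, whence $M\parallel F[i]$. This converse, resting on the module structure of $L$ over the intermediate field $F[i]$, is the main obstacle.

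Granting the characterisation, the conclusion is short. Each line $M$ of $\bPLF$ is parallel to some $F[i]$ with $i\in L\setminus F$, so $Mi=M$. Applying the commutation rule with $S=M$ and $b=i$ yields $(M^{\perp_\phi})i=(Mi)^{\perp_\phi}=M^{\perp_\phi}$, so $M^{\perp_\phi}$ is again $\mu_i$-invariant and therefore parallel to $F[i]$. Hence $M^{\perp_\phi}\parallel F[i]\parallel M$, and $\perp_\phi$ sends $M$ to a parallel line, as required.
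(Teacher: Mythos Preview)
Your proof is correct. The second assertion is handled just as in the paper, via the commutation identity $(Mb)^{\perp_\phi}=(M^{\perp_\phi})b$ that follows from \eqref{eq:konform}. For the first assertion, however, you take a genuinely different route.

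The paper argues as follows: writing $M=F[i]\cdot a$, it notes that for each $y\in M^{\perp_\phi}\setminus\{0\}$ the translate $My$ is a parallel of $M$ lying in the plane $\ker\phi$ (since $(my)^\phi=\li m,y\re_\phi=0$ for all $m\in M$). The spread property of $\cS(M)$ then forces all these translates to coincide with a single line $F[i]\cdot c$, and from $F[i]\cdot ay=F[i]\cdot c$ for every such $y$ one reads off $M^{\perp_\phi}=F[i]\cdot a^{-1}c\parallel M$.

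You instead isolate the characterisation $M\parallel F[i]\Longleftrightarrow Mi=M$ (via the $F[i]$-module structure of $L$) and then apply the commutation rule once with $b=i$. This is a clean and arguably more conceptual argument: it identifies a parallel class with the set of lines fixed by a single Clifford translation $\mu_i$ and shows that $\perp_\phi$, commuting with $\mu_i$, must preserve that set. The paper's approach is more explicit---it actually pins down $M^{\perp_\phi}$ as a specific translate of $M$ inside $\ker\phi$---and leans on the spread property of the parallel classes rather than on linear algebra over the intermediate field. Both proofs are short; yours trades the appeal to the spread for the observation that a two-dimensional $F$-subspace stable under $\mu_i$ is automatically a one-dimensional $F[i]$-subspace.
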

\begin{proof}
Given a line $M$ there is an $i\in L\setminus F$ and an $a\in L\setminus \{0\}$
with $M=F[i]\cdot a$. For all $y\in M^{\perp_\phi}\setminus\{0\}$ we have $\li
M,y\re_\phi=0$ so that $M \parallel My\subset\ker\phi$. Since $\cS(M)$ is a
spread, there cannot be two distinct lines parallel to $M$ in the plane
$\ker\phi$. Hence, as $y$ varies in the non-empty set
$M^{\perp_\phi}\setminus\{0\}$, the line $My$ remains unchanged. So there is a
constant $c\in L\setminus\{0\}$ such that $My=F[i]\cdot ay = F[i]\cdot c$ holds
for all $y\in M^{\perp_\phi}$. This implies $M^{\perp_\phi}=F[i]\cdot
a^{-1}c\parallel M$.
\par
If $M=M^{\perp_\phi}$ is satisfied for one line $M$ then we obtain
$Mb=M^{\perp_\phi}\cdot b=(Mb)^{\perp_\phi}$ for all $b\in L\setminus\{0\}$ by
\eqref{eq:konform}. So all lines from $\cS(M)$ are fixed under $\perp_\phi$.
\end{proof}
Note that a self-polar line $M$, i.e., a line with the property
$M=M^{\perp_\phi}$, exists precisely when $\perp_\phi$ is a null polarity. If
this is the case then all such lines constitute a \emph{general linear complex
of lines}. On the other hand, any of the elliptic polarities $\perp_\phi$ can
be used to given an alternative definition of the Clifford parallelism
\cite[Remark~3]{havl-94c}.

\begin{rem}
Our Clifford parallelism is readily seen to be \emph{cosymplectic}\/
\cite[Definition~3]{bett+r-05a}, i.e., any two distinct parallel classes
$\cS(M)$ and $\cS(N)$ belong to a common general linear complex of lines. In
order to establish this result, we may assume that both $M$ and $N$ are lines
through the point $F\cdot 1=F$, whence there is a non-zero $F$-linear form
$\phi:L\to F$ that vanishes on the plane $M+N$. The associated polarity
$\perp_\phi$ is null, and its self-polar lines comprise a linear complex which
contains $\cS(M)\cup\cS(N)$. From this observation and from
Proposition~\ref{prop:regular}, our parallelism is also Clifford in the sense
of \cite[Definition~1.9]{bett+r-12a}.
\end{rem}

All bilinear forms from \eqref{eq:blf} can be extended to symmetric
$L$-bilinear forms $\LL\to L$. More generally, any $L$-linear form $\psi:\LL\to
L$ defines a symmetric $L$-bilinear form in analogy to \eqref{eq:blf}. Since
$g^2\in F(1\ot 1)\subset L(1\ot 1)$ holds for all elements $g\in\LL$, the
analogue of \eqref{eq:konform} is satisfied too. However, such a bilinear form
$\lire_\psi$ can be degenerate for $\psi\neq 0$, and we leave a detailed
exposition to the reader. From Lemma~\ref{lem:annihil}, the orthogonal subspace
of the absolute point $\cA$ contains the absolute plane $\Pi$ for any choice of
$\psi$. Therefore, when $\lire_\psi$ is non-degenerate, the projective polarity
$\perp_\psi$ will send the absolute point $\cA$ to the absolute plane $\Pi$,
and the polar planes of the points from $\Pi$ will all contain the absolute
point $\cA$.

\section{Future Research}
We are of the opinion that further investigation should prove worthwhile of
those Clifford parallelisms that arise according to \cite[Satz~1]{karz+k+s-73a}
from purely inseparable field extensions of degree greater than four. This task
should not be confined to the finite-dimensional case. This was one motivation
for avoiding, wherever possible, the use of coordinates in the present paper.
It is striking that, according to the classification from \cite{karz+k+s-73a},
none of those Clifford parallelisms has an analogue when the characteristic of
the ground field is different from two.



\noindent
Hans Havlicek\\
Institut f\"{u}r Diskrete Mathematik und Geometrie\\
Technische Universit\"{a}t\\
Wiedner Hauptstra{\ss}e 8--10/104\\
A-1040 Wien\\
Austria\\
\texttt{havlicek@geometrie.tuwien.ac.at}
\end{document}